\documentclass[11pt]{amsart}

\usepackage[T1]{fontenc}
\usepackage[utf8]{inputenc}
\usepackage{lmodern}
\usepackage{amsmath,amssymb,amsthm,mathtools,mathrsfs, amsrefs}
\usepackage[margin=1.05in]{geometry}
\usepackage[shortlabels]{enumitem}
\usepackage{tikz-cd}
\usepackage[colorlinks=true,linkcolor=blue,citecolor=blue,urlcolor=blue]{hyperref}
\usepackage{microtype}

\newtheorem{theorem}{Theorem}[section]
\newtheorem{lemma}[theorem]{Lemma}
\newtheorem{proposition}[theorem]{Proposition}
\newtheorem{corollary}[theorem]{Corollary}
\theoremstyle{definition}

\newcommand{\C}{\mathbb C}
\newcommand{\N}{\mathbb N}
\newcommand{\cT}{\mathcal T}
\newcommand{\cH}{\mathcal H}
\newcommand{\cF}{\mathcal F}
\newcommand{\cD}{\mathcal D}

\newcommand{\ev}{\mathrm{ev}}
\newcommand{\cU}{\mathcal U}
\newcommand{\cS}{\mathcal S}
\newcommand{\cX}{\mathcal X}

\title{Strict comparison and selflessness}
\author{Leonel Robert}

\begin{document}
\begin{abstract}
It is shown that an infinite-dimensional, simple, unital, monotracial C*-algebra with strict comparison with respect to its trace is selfless.
\end{abstract}		

\maketitle

\section{Introduction}

A C*-probability space $(A,\rho)$, consisting of a unital C*-algebra and a GNS-faithful state, is called selfless if for some free ultrafilter $\cU$ 
and C*-probability space $(C,\kappa)$ with $C\neq \C$, there is a $*$-homomorphism
$$
(A,\rho)*(C,\kappa)\to (A^{\cU},\rho^{\cU})
$$
fixing $A$. This concept was introduced in \cite{selfless}, and was used by the authors of \cite{amrutametal} to resolve the long-standing problem of proving
the strict comparison property for the C*-algebra $C_r^*(\mathbb F_2)$. Their approach, which relied on the rapid decay property for groups, was further applied and explored in \cite{vigdorovich1, hayesetal,raumthielvilalta}. In \cite{ozawa}, Ozawa introduced an array of methods to establish selflessness that do not rely on the rapid decay property, substantially extending previous results. Since then,
selflessness has been actively investigated. For some recent contributions, see \cite{floresetal,gould,hayesetal2,gaojungeetal,xinmaetal,bell,ohshima,omland,vigdorovich,lebars} (this list is not exhaustive).

If $(A,\rho)$ is selfless and $\rho$ is a trace, then $A$ is infinite-dimensional, simple, monotracial, and has strict comparison with respect to $\rho$ \cite{selfless}. We show here that these properties characterize selflessness in the tracial case. The nontracial case has already been resolved: if $(A,\rho)$ is selfless and $\rho$ is nontracial, then $A$ is simple and purely infinite \cite{selfless, gould}. Conversely, for any state $\rho$ on a simple purely infinite C*-algebra $A$, the C*-probability space $(A,\rho)$ is selfless \cite{ozawa}.

\begin{theorem}\label{mainthm}
Let $A$ be an infinite-dimensional, simple, unital, monotracial C*-algebra with strict comparison with respect to its trace $\tau$. Then $(A,\tau)$ is selfless.
\end{theorem}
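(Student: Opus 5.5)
The plan is to reduce selflessness to the existence, in the ultrapower, of a Haar unitary that is $*$-free from $A$. By a standard criterion (used in \cite{selfless} and \cite{ozawa}), $(A,\tau)$ is selfless as soon as there is a unitary $u\in A^{\cU}$ such that $\tau^{\cU}(u^n)=0$ for $n\neq 0$ and $C^*(u)$ is $*$-free from $A$ with respect to $\tau^{\cU}$. Indeed, taking $(C,\kappa)=(C(\mathbb T),\text{Haar})$, freeness together with the fact that $\tau$ is GNS-faithful then yields the required $*$-homomorphism $(A,\tau)*(C,\kappa)\to(A^{\cU},\tau^{\cU})$ fixing $A$. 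By a diagonal argument over finite sets $F\subset A$, tolerances $\epsilon$ and word lengths $N$, it suffices to produce, for each such triple, a unitary $u\in A$ whose mixed moments $\tau(a_1u^{n_1}a_2u^{n_2}\cdots a_ku^{n_k})$ are within $\epsilon$ of the values prescribed by freeness, for $a_i\in F$, $|n_i|\le N$ and $k\le N$.

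To build such $u$, I would first invoke the regularity that strict comparison yields in the monotracial setting. Infinite-dimensionality and simplicity give diffuse elements, and strict comparison lets us realize any Cuntz class with rank function in $(0,1)$. From these I would obtain a unital embedding of a suitable building block into $A$ up to approximation, namely almost central, approximately orthogonal positive contractions $e_1,\dots,e_m$ of nearly equal trace $\approx 1/m$, together with partial-isometry-like elements $v_{ij}$ implementing Cuntz equivalences $e_i\sim e_j$. Here strict comparison is used as follows: $d_\tau(e_i)=d_\tau(e_j)$ up to a small error, so after shrinking by a small cut-down we get $e_i\precsim e_j$, and it also gives comparison-based ``tracial $\mathcal Z$-absorption-like'' approximate matrix units. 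The aim is to obtain in $A^{\cU}$ a copy of a large matrix algebra $M_m$, or better a unitary $w$ of approximate order $m$ with spectral projections of trace $1/m$, which is approximately free from $F$ in the sense of moments to order $N$. The intended candidate is $u=w\,s$, where $w$ is a random-like permutation unitary of the matrix units and $s$ is a diagonal Haar-type unitary, in the spirit of Ozawa's argument.

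The key analytic step is the moment computation. We need $\tau(a_1u^{n_1}\cdots a_ku^{n_k})\approx 0$ whenever all $\tau(a_i)=0$ and all $n_i\neq 0$. With $u$ modelled on a random permutation of $m$ near-orthogonal ``corners'' combined with independent phases, averaging over permutations and phases shows that the expectation of this trace is controlled by terms in which the $a_i$ act within a single corner. Those terms reduce to $\tau(a_1\cdots)$-type expressions with a $1/m$ weight, or vanish by centredness. The concentration of measure over the symmetric group then lets us pick a single good permutation. This is where monotraciality enters: compressions $e_iae_i$ must have normalized trace close to $\tau(a)$, and uniqueness of the trace, via a Dixmier-type averaging argument, ensures that elements with trace zero are approximately sums of commutators, so their corner compressions are tracially small uniformly.

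I expect the main obstacle to be the construction of the approximate matrix units (Cuntz subequivalences upgraded to genuine partial isometries that are almost orthogonal and have controlled tracial error) using only strict comparison, without assuming $\mathcal Z$-stability or nuclearity. I would first try to work in $A^{\cU}$, where the trace-kernel quotient is a II$_1$ factor, so matrix units exist there. Strict comparison together with a Kaplansky-type lifting (in the style of Kirchberg--R{\o}rdam property (SI) arguments) should then lift these to the norm ultrapower up to a tracially negligible defect. Because we only need the moments in trace, that tracially small defect is harmless.
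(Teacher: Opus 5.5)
Your reduction step is where the argument breaks, and the rest of the plan does not repair it. A witness of selflessness is a $*$-homomorphism defined on the \emph{reduced} free product $A*_r C(\mathbb{T})$. It therefore requires the norm bounds $\|P(a_1,\dots,a_k,u)\|_{A^{\cU}}\le\|P(a_1,\dots,a_k,\lambda)\|_{A*_rC(\mathbb T)}$ for all noncommutative $*$-polynomials $P$.

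Freeness of $u$ from $A$ with respect to $\tau^{\cU}$ gives only the opposite direction. The GNS image of $C^*(A,u)$ under $\tau^{\cU}$ is the reduced free product, so the reduced free product is a \emph{quotient} of $C^*(A,u)$. To get a map back you would need $\tau^{\cU}$ to be GNS-faithful on $C^*(A,u)$. But the GNS kernel of $\tau^{\cU}$ on $A^{\cU}$ is the trace-kernel ideal $J_\tau\neq 0$, and GNS-faithfulness of $\tau$ on $A$ is beside the point.

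Correspondingly, matching finitely many moments $\tau(a_1u^{n_1}\cdots a_ku^{n_k})$ and passing to the ultrapower only determines the image of $u$ in $A^{\cU}/J_\tau\cong M^{\cU}$. Norms in $A^{\cU}$ are limits of operator norms in $A$, and trace moments do not control them; this is the gap between asymptotic freeness and strong convergence.

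In fact a unitary with all the properties you aim for exists for \emph{every} infinite-dimensional simple unital monotracial $A$, with no comparison hypothesis. Popa's theorem gives a Haar unitary $v\in M^{\cU}$ free from $M$. Write $v=e^{ih}$ with $h=h^*\in M^{\cU}$ and lift $h$ to a self-adjoint $k\in A^{\cU}$. Then $e^{ik}$ is a unitary with the required $\tau^{\cU}$-moments, since $\tau^{\cU}=\tau_{M^{\cU}}\circ p$. So your matrix-unit and random-permutation construction would at best reprove this. If your criterion were valid, strict comparison---a necessary condition for selflessness---would play no role in the theorem. Your closing remark that a tracially negligible defect is harmless is exactly the failing step: defects in $J_\tau$ are invisible to moments but not to norms.

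The paper's proof starts essentially where your plan ends. Popa's theorem is used only to get a trace-preserving embedding $\iota\colon\cS_\tau\to M^{\cU}$. The relative-cone lifting, which uses that $J_\tau$ is a $\sigma$-ideal, gives $\Theta\colon\cD(A,\cS_\tau)\to A^{\cU}$ with $p\circ\Theta=\iota\circ\ev_1$. Thus $\Theta(ts_\tau)$ already has the $\tau^{\cU}$-moments of a semicircular element free from $A$, which is the analogue of your $u$.

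The real work is to repair this lift. Strict comparison in $A^{\cU}$, together with excision of the trace on the simple algebra $\cS_\tau$ and Kirchberg's $\epsilon$-test, produces $r\in A^{\cU}$ with the \emph{exact} identity $r^*\Theta(f)r=\tau_\Omega(f(1))\Theta(1-t^2)$ for $f\in\cD$. The universal property of Toeplitz--Pimsner algebras then converts these exact relations into a genuine $*$-homomorphism $\cS_\tau\to A^{\cU}$ fixing $A$ with $s_\tau\mapsto\Theta(ts_\tau)+r+r^*$. This step goes through $W=t\ell_\tau+\ell_{\widetilde\xi}$, which satisfies $W^*aW=\tau(a)1$. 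The missing idea in your proposal is some mechanism of this kind, turning comparison in $A^{\cU}$ into exact norm-level relations rather than approximate trace moments.
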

 It was shown in \cite{selfless} that the Jiang--Su algebra $\mathcal Z$ and the UHF C*-algebra $\mathcal Q$ are selfless. The proof
 relied on Ozawa's embedding of $C_r^*(\mathbb F_2)$ in $\mathcal Z^{\cU}$ in \cite{ozawa2}, which in turn relied on strong convergence theorems for random matrix ensembles. 
The above theorem directly shows that $\mathcal Z$ and $\mathcal Q$ are selfless, and this in turn implies that $C_r^*(\mathbb F_2)$ embeds in both $\mathcal Z^{\cU}$ and $\mathcal Q^{\cU}$. Importantly, the embedding of $C_r^*(\mathbb F_2)$ into $\mathcal Q^{\cU}$ is not used as an ingredient in the proof of Theorem~\ref{mainthm}. We thus obtain a non-probabilistic proof of the Haagerup-Thorbj{\o}rnsen result that $C_r^*(\mathbb F_2)$ is MF \cite{HT} and of Ozawa's result that it embeds in $\mathcal Z^{\cU}$ \cite{ozawa2}.

\begin{corollary}[Haagerup--Thorbj{\o}rnsen, Ozawa]
\label{coro:HTO}
For any free ultrafilter $\cU$ over $\N$, the reduced group C*-algebra $C_r^*(\mathbb F_2)$ embeds in both $\mathcal Z^{\cU}$ and $\mathcal Q^{\cU}$. 
\end{corollary}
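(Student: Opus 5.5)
The corollary follows from Theorem~\ref{mainthm} together with the known consequences of selflessness. The plan is to first check that $\mathcal Z$ and $\mathcal Q$ satisfy the hypotheses of Theorem~\ref{mainthm}. Then we use selflessness to embed a free product containing a copy of $C_r^*(\mathbb F_2)$, and finally pass from ``some free ultrafilter'' to ``every free ultrafilter''.

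\emph{Step 1: hypotheses.} Both $\mathcal Z$ and $\mathcal Q$ are infinite-dimensional, simple, unital and monotracial. Strict comparison with respect to the unique trace is classical for both. $\mathcal Q$ is a UHF algebra, so its Cuntz semigroup is determined by the trace. $\mathcal Z$ has strict comparison by R{\o}rdam's theorem, and this also follows from $\mathcal Z$-stability. Theorem~\ref{mainthm} then gives that $(\mathcal Z,\tau)$ and $(\mathcal Q,\tau)$ are selfless.

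\emph{Step 2: producing free Haar unitaries.} Let $A$ be $\mathcal Z$ or $\mathcal Q$, and let $\tau$ be its trace. The main point is that $C_r^*(\mathbb F_2)$ embeds in $A^{\cU}$ for some free ultrafilter $\cU$.
\begin{itemize}
\item[(a)] \emph{A Haar unitary in $A$.} Since $A$ has real rank zero or projections of arbitrary trace (for $\mathcal Q$), or contains unitaries with diffuse spectral measure (for $\mathcal Z$, via a self-adjoint whose spectral distribution under $\tau$ is Lebesgue), functional calculus gives unitaries $u_n\in A$ whose $*$-moments approximate those of a Haar unitary. In the ultrapower $A^{\cU}$ these yield an honest Haar unitary $u$ with respect to $\tau^{\cU}$.
\item[(b)] \emph{Choice of $(C,\kappa)$.} I would use the standard equivalent formulations of selflessness from \cite{selfless}. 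One of these says that one can take $(C,\kappa)=(A,\tau)$ itself, or iterate: the second copy contains a Haar unitary $v$ that is $*$-free from $A$, and in particular free from $u\in A^{\cU}$ after reindexing. If only $(C,\kappa)\neq\C$ is available, I would instead use that selflessness persists under passing to free products and ultrapowers, as shown in \cite{selfless}. Concretely, I would iterate the embedding $(A^{\cU},\tau^{\cU})*(C,\kappa)\to (A^{\cU})^{\cU}$ and use a diagonal argument to obtain two $*$-free Haar unitaries in a single ultrapower $A^{\cU}$.
\item[(c)] \emph{Generating $C_r^*(\mathbb F_2)$.} Two $*$-free Haar unitaries generate a copy of $C_r^*(\mathbb F_2)$ inside the GNS image. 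Since $\tau^{\cU}$ is faithful on $A^{\cU}$ (as $A$ is simple and monotracial), this copy is genuinely a C*-subalgebra of $A^{\cU}$.
\end{itemize}

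\emph{Step 3: all free ultrafilters.} The embedding in $A^{\cU}$ is determined by countably many $*$-polynomial norm and trace conditions. Being a Haar unitary and being $*$-free are trace conditions, and faithfulness of $\tau^{\cU}$ then handles the norms. By countable saturation, an embedding of the separable algebra $C_r^*(\mathbb F_2)$ into one ultrapower $A^{\cU}$ amounts to approximate solvability of these conditions in $A$. That condition is independent of the ultrafilter, so the embedding exists for every free ultrafilter.

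The main obstacle is Step~2(b): converting the abstract definition, which only gives some nontrivial $C$, into two free Haar unitaries. I expect to handle it by citing the characterization in \cite{selfless} that selfless C*-probability spaces absorb free products with arbitrary $(C,\kappa)$, in particular $C=C(\mathbb T)$ with Haar measure, into their ultrapower.
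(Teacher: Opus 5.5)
\textbf{There is a genuine gap in Step~2(c), and it recurs in Step~3.} You assert that $\tau^{\cU}$ is faithful on $A^{\cU}$. It is not. $A^{\cU}$ is the norm ultrapower, and the kernel of $\tau^{\cU}$ contains the trace-kernel ideal $J_\tau=\{x:\tau^{\cU}(x^*x)=0\}$, which is nonzero.

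To see this, pick mutually orthogonal norm-one positive elements $a_1,\dots,a_n\in A$. Then $\sum_i\tau(a_i)\le 1$, so one of them has trace at most $1/n$. A sequence of such elements defines a norm-one element of $J_\tau$. The paper relies on exactly this: $A^{\cU}/J_\tau\cong M^{\cU}$, and $e\in J_\tau$ is nonzero.

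Consequently, two unitaries $u,v\in A^{\cU}$ that are Haar and $*$-free with respect to $\tau^{\cU}$ only generate a C*-algebra $C^*(u,v)$ that surjects onto $C_r^*(\mathbb F_2)$ through the GNS map. The norms of $*$-polynomials in $u,v$ are bounded below by the reduced norms, but they can be larger. Controlling these norms, as opposed to moments, is exactly the norm-convergence content of Haagerup--Thorbj{\o}rnsen. Moment approximation and diagonal arguments that track only traces give Voiculescu-type asymptotic freeness, i.e.\ an embedding into $M^{\cU}$, not into $A^{\cU}$. So Steps~2(a)--(b), as you designed them, discard precisely the information the statement needs.

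\textbf{How to repair it.} Get injectivity from the domain rather than the codomain. The selfless map is a trace-preserving $*$-homomorphism on the \emph{reduced} free product $(A,\tau)*(C,\kappa)$, and the free-product trace there is faithful, so the map is injective. It therefore suffices to choose $C$ so that $C_r^*(\mathbb F_2)$ sits inside the reduced free product. There are two natural choices:
\begin{enumerate}[(i)]
\item Take $C=C_r^*(\mathbb F_\infty)$ via \cite[Theorem~2.6]{selfless}. This is what the paper does.
\item Take $C=C(\mathbb T)$ with Haar measure, together with a Haar unitary $u$ chosen in $A$ itself (one exists in both $\mathcal Z$ and $\mathcal Q$). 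Then $C^*(u)*C(\mathbb T)\cong C_r^*(\mathbb F_2)$ inside the reduced free product.
\end{enumerate}
With $u$ only in $A^{\cU}$, as in your Step~2(a), freeness from $A$ does not give freeness from $u$. Any reindexing that fixes this must carry norm conditions along.

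Likewise, your Step~3 is valid only if the countably many conditions explicitly include the norm conditions $\|P(u,v)\|=\|P(\lambda_a,\lambda_b)\|$, where $\lambda_a,\lambda_b$ are the canonical unitaries of $C_r^*(\mathbb F_2)$ and $P$ ranges over $*$-polynomials with rational coefficients.

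Finally, applying the theorem directly to $\mathcal Q$ is fine, but the paper gets the $\mathcal Q$ case more cheaply from $\mathcal Z\hookrightarrow\mathcal Q$.
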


\subsection{Structure of the proof} The proof is divided into three parts. Part I starts with an embedding
$$
\iota\colon \cS_{\tau}\longrightarrow (\pi_\tau(A)'')^{\,\cU}
$$
obtained from Popa's theorem \cite{Popa}. Here
$
(\cS_{\tau},\tau_\Omega):=(A,\tau)*(C([-2,2]),\sigma)$,
where $\sigma$ has semicircular distribution.
From this input, a $*$-homomorphism $\Theta$ from the relative cone $\cD(A,\cS_{\tau})$ into $A^{\cU}$ is obtained that fixes $A$ and satisfies $p\circ\Theta=\iota\circ \ev_1$; see \eqref{diagram}. 
This produces a lift  $x=\Theta(ts_\tau)$ of the semicircular element $s_\tau$.  In Part II, strict comparison is used in $A^{\cU}$ to obtain $r\in A^{\cU}$ such that 
$$
r^*br=\tau^{\cU}(b)e,\qquad b\in \Theta(\cD),
$$
where $e=\Theta(1-t^2)$. Part III ``repairs'' the lift $x$ of $s_\tau$ by setting 
$$
y=x+r+r^*
$$
and showing that the assignment $a\mapsto a$, $s_\tau\mapsto y$,  extends to a $*$-homomorphism
from $\cS_{\tau}$ to $A^{\cU}$ fixing $A$, and thus witnessing selflessness of $A$.

\subsection{Use of artificial intelligence}
The proof was developed through an interactive collaboration with ChatGPT Plus, using the models GPT-5.6 and GPT-6.0. ChatGPT supplied the crucial mathematical ideas, while I brainstormed with ChatGPT, provided feedback, checked and revised the arguments, and organized the proof in its present form. The proof was developed in two stages. An initial version assumed in addition that $A$ is exact and tracially $\mathcal Z$-stable (in the sense of \cite{HO}). These assumptions were used only in the first part of the argument. Further interaction with ChatGPT led to their removal, using Popa's theorem and the relative-cone lifting Proposition~\ref{prop:relative-cone-lift}. I wrote the manuscript, with proofreading assistance from ChatGPT, and take responsibility for the correctness of the arguments and the exposition.

\section{Preliminaries}
\subsection{The Toeplitz-Pimsner C*-algebra of a cp map and of a state}
For background on Hilbert C*-modules, C*-correspondences, and the KSGNS construction, we refer to \cite{brown-ozawa,lance}.
We briefly review these objects here and introduce their notation.

Let $A$ be a unital C*-algebra and  $\phi\colon A\to A$ a cp map.
The KSGNS construction applied to $\phi$ yields
a C*-correspondence $\cH_{\phi}=A\otimes_{\phi} A$
over $A$ obtained  by ``separation and completion'' 
from the algebraic tensor product $A\odot A$
endowed with the inner product
$$
\langle a_1\otimes a_2,a_1'\otimes a_2'\rangle:=
a_2^*\phi(a_1^*a_1')a_2'.
$$
See \cite{Pimsner}. The C*-correspondence $\cH_{\phi}$
is singly generated by the canonical generator
$\xi_{\phi}$ with class representative $1\otimes 1$.

Let $\cF(\cH_{\phi})$ denote the Fock space
of $\cH_{\phi}$. Let $\ell_{\phi}$ be the left creation operator on $\cF(\cH_{\phi})$ associated to
the generator $\xi_{\phi}\in \cH_{\phi}$. We denote by $\cT(\cH_{\phi})$ the 
Toeplitz-Pimsner C*-algebra of $\cH_{\phi}$. We note that $\cT(\cH_{\phi})=C^*(A,\ell_{\phi})$ and that
$$
\ell_{\phi}^*a\ell_{\phi}=\phi(a),\qquad a\in A.
$$
Finally, we denote by $E_{\Omega}\colon \cT(\cH_{\phi})\to A$
the vacuum conditional expectation.

The C*-algebra $\cT(\cH_{\phi})$ has the following
universal property (\cite[Proposition~1.3]{fowler-raeburn} and \cite[Definitions~2.1 and~3.1]{katsura}). Let $\theta\colon A\to B$
be a unital $*$-homomorphism  and let  $T\in B$ be
such that
$$
T^*\theta(a)T=\theta(\phi(a)),\qquad a\in A.
$$
(Thus, the assignments $\xi\mapsto T$ and $a\mapsto \theta(a)$ extend to a covariant representation of $\cH_{\phi}$ on $B$.)
Then there exists a unique $*$-homomorphism $\psi\colon \cT(\cH_{\phi})\to B$ extending $\theta$ and such that $\psi(\ell_{\phi})=T$.

The semicircular element with covariance $\phi$ obtained from $\ell_{\phi}$ is defined as 
$$
s_{\phi}=\ell_{\phi}+\ell_{\phi}^*\in \cT(\cH_\phi).
$$
We set 
$$
\cS(\cH_{\phi}):=C^*(A,s_{\phi})\subseteq \cT(\cH_{\phi}).
$$

In the arguments below we make special use of the above formalism in the case that $\phi$ is given  by a tracial state, i.e., $\phi(a)=\tau(a)1$ for $a\in A$, with $\tau$ a tracial state. In this case we write simply  $\cT_{\tau}$ and $\cS_{\tau}$ for the Toeplitz-Pimsner C*-algebra  and the C*-algebra generated by $A$ and the semicircular element $s_{\tau}$, respectively. We note that $\cT_{\tau}$
comes endowed with a vacuum state given by
$$
\tau_{\Omega}=\tau\circ E_{\Omega},
$$
which restricts to a trace on $\cS_{\tau}$.

We have the following state-preserving isomorphisms:
\begin{align*}
(\cT_{\tau},\tau_{\Omega}) &\cong
(A,\tau)*(\cT,\omega),\\
(\cS_{\tau},\tau_{\Omega}) &\cong
(A,\tau)*(C([-2,2]),\sigma),
\end{align*}
where $(\cT,\omega)$ denotes the classical Toeplitz C*-algebra with the vacuum state and $\sigma$ denotes  the state on $C([-2,2])$   with a semicircular distribution of variance 1.

\subsection{Relative cone lifting}
Given unital C*-algebras $A\subseteq C$, we define
$$
\cD(A,C)
=\{f\in C([0,1],C):f(0)\in A\}.
$$
We denote by $\ev_1\colon\cD(A,C)\to C$ the
evaluation at $1$ map.

Let $Q$ be a unital C*-algebra. An ideal $J\lhd Q$ is called a $\sigma$-ideal if for every separable C*-subalgebra $B\subseteq Q$ there exists a positive contraction $d\in B'\cap J$ such that $db=b$  for all $b\in B\cap J$.

The proof of the following proposition closely follows the proof of \cite[Proposition~1.17]{kirchberg-abel}.

\begin{proposition}\label{prop:relative-cone-lift}
Let $Q$ be a unital C*-algebra, let $J\lhd Q$ be a $\sigma$-ideal, and let
$p\colon Q\to Q/J$ be the quotient map. Let $A\subseteq Q$ and $C\subseteq Q/J$ be
separable and unital C*-subalgebras such that $A\cap J=\{0\}$  and 
$p(A)\subseteq C$.
Identifying $A$ with $p(A)\subseteq C$, there exists a unital
$*$-homomorphism $\Theta\colon\cD(A,C)\to Q$
fixing $A$ and such that 
$p\circ\Theta=\ev_1$.
Equivalently, the following diagram commutes:
$$\label{liftdiagram}
\begin{tikzcd}
A \arrow[r,hook] \arrow[d,equal]
& \cD(A,C) \arrow[r,"\ev_1"] \arrow[d,"\Theta"]
& C \arrow[d,hook] \\
A \arrow[r,hook]
& Q \arrow[r,"p"]
& Q/J .
\end{tikzcd}
$$
\end{proposition}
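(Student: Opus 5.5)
We follow the strategy of Kirchberg's proof that cones are projective relative to $\sigma$-ideals, adapted so that the endpoint $0$ carries the subalgebra $A$ instead of $0$.

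\textbf{Step 1: reduction to $A$-linear data.} Since $C$ is separable, choose a dense sequence in $C$ and lift each element arbitrarily to $Q$. Let $B\subseteq Q$ be the separable C*-algebra generated by $A$ and these lifts, so $p(B)=C$. Put $B_0=B+J_0$ with $J_0\subseteq J$ separable, arranged so that $p|_{B}$ induces $B/(B\cap J)\cong C$. Because $A\cap J=\{0\}$, the map $p$ is injective on $A$, so the identification of $A$ with $p(A)$ is consistent.

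\textbf{Step 2: a quasicentral positive contraction.} Apply the $\sigma$-ideal property to the separable algebra $B$. This gives a positive contraction $d\in B'\cap J$ with $db=b$ for all $b\in B\cap J$. In particular $d$ commutes with $A$ and with every lift, and $p(d)=0$.

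\textbf{Step 3: the map $\Theta$.} Write $e=1-d$, a positive contraction that commutes with $B$ and satisfies $p(e)=1$ and $e(B\cap J)=0$. The candidate is
$$
\Theta(f)=\text{``}f(e)\text{''},
$$
meaning the following. For $f\in\cD(A,C)$, consider $g\in C([0,1],B)$ with $p\circ g=f$. The commuting pair $(B,e)$ yields a $*$-homomorphism $C([0,1])\otimes B\to Q$ sending $h\otimes b\mapsto h(e)b$; this is well defined because $e$ commutes with $B$, using the maximal tensor product and nuclearity of $C([0,1])$. Apply it to $g$.

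To make this independent of the lift $g$ and multiplicative, use $\cD(A,C)\cong\{g\in C([0,1],B): g(0)\in A\}/\{g: g(t)\in B\cap J\ \forall t,\ g(0)=0\}$. One checks that the kernel consists of elements $g$ with $g(t)\in B\cap J$ and $g(0)=0$. Such $g$ are approximated by sums $\sum h_i\otimes j_i$ with $h_i(0)=0$ and $j_i\in B\cap J$. Since $h_i(0)=0$ we have $h_i(e)=h_i(1-d)$, and $(1-d)j_i=0$ forces $h_i(1-d)j_i=h_i(0)j_i=0$, by functional calculus on the relation $dj=j$. So $\Theta$ vanishes on the kernel and descends to a $*$-homomorphism on $\cD(A,C)$.

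Constant functions $a\in A$ map to $a$, so $\Theta$ fixes $A$ and is unital. Finally,
$$
p(\Theta(f))=p\bigl(\text{image of }g\bigr)=\text{evaluation at }p(e)=1\text{ of }p\circ g=f(1),
$$
so $p\circ\Theta=\ev_1$.

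\textbf{Main obstacle.} The delicate point is the exact identification of $\cD(A,C)$ as a quotient of $\{g\in C([0,1],B):g(0)\in A\}$. This needs surjectivity onto functions with $g(0)\in A$: lift $f$ to $B$ pointwise continuously, which is possible by Bartle–Graves or via $C([0,1])\otimes B\to C([0,1])\otimes C$ being surjective. Then correct the value at $0$ by subtracting $(1-t)\bigl(g(0)-f(0)\bigr)$, where $f(0)\in A\subseteq B$; this correction lies in $C_0((0,1])$-type kernel terms only if $g(0)-f(0)\in J$, which holds. It also needs the kernel description, which rests on $A\cap J=0$. I expect to spend most care verifying that this kernel is exactly $\{g: g([0,1])\subseteq B\cap J,\ g(0)=0\}$ and that it is annihilated, as above.
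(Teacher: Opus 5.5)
Your proposal is correct and is essentially the paper's proof. Your $B$ and $e=1-d$ play the roles of the paper's $Q_0$ and $h$, and you factor the functional-calculus map $C([0,1])\otimes B\to Q$ through the surjection $\cD(A,B)\to\cD(A,C)$, whose kernel $C_0((0,1])\otimes(B\cap J)$ is annihilated because $e(B\cap J)=0$. (One small slip: the correction term $(1-t)(g(0)-f(0))$ lies in $C([0,1])\otimes(B\cap J)$, not in $C_0((0,1])\otimes(B\cap J)$; that is all you need for it not to change $p\circ g$. The auxiliary $B_0=B+J_0$ is unnecessary.)
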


\begin{proof}
Choose a unital separable C*-subalgebra $Q_0\subseteq Q$ containing $A$ and such that $p(Q_0)=C$. Set $I=Q_0\cap J$.
Since $J$ is a $\sigma$-ideal, there exists a positive contraction
$d\in Q_0'\cap J$ such that $db=b$ for all $b\in I$.
Set $h=1-d$. Then $h$ is a positive contraction such that $h\in Q_0'$, $h\perp I$, and $p(h)=1$.

Since $C^*(1,h)$ and $Q_0$ commute, there is a unital $*$-homomorphism
$$ 
\alpha\colon C([0,1])\otimes Q_0\to Q,\qquad
\alpha(f\otimes c)=f(h)c.
$$
Here we use the nuclearity of $C([0,1])$.

Pointwise application of $p|_{Q_0}\colon Q_0\to C$ induces a $*$-homomorphism
$$
\widehat p\colon\cD(A,Q_0)\to\cD(A,C).
$$
Writing $\cD(A,C)=A+C_0((0,1])\otimes C$, we readily see
that $\widehat p$ is surjective. Moreover, if
$\widehat p(f)=0$, then $f$ is $I$-valued and $f(0)=0$, since
$A\cap I=\{0\}$. Thus
$$
\ker\widehat p=C_0((0,1])\otimes I.
$$

On the other hand, if $f\in C_0((0,1])$ and $b\in I$, then
$hb=0$ and hence $f(h)b=0$. It follows that
$\alpha$ vanishes on $C_0((0,1])\otimes I$. Hence, the
restriction of $\alpha$ to $\cD(A,Q_0)$ factors through
$\widehat p$, giving a unital $*$-homomorphism
$\Theta\colon \cD(A,C)\to Q$
 such that 
 $$
 \alpha|_{\cD(A,Q_0)}=\Theta\circ\widehat p.
 $$

For $f\in\cD(A,Q_0)$, since $p(h)=1$, we have $p(\alpha(f))=p(f(1))$.
It follows that $p\circ\Theta=\ev_1$.
Finally, for $a\in A$, viewed as a constant function, $\Theta(a)=a$, so $\Theta$ fixes $A$.
\end{proof}

\section{Part I}
The proof of Theorem \ref{mainthm} can be reduced to the case that $A$ is separable by considering separable elementary submodels of a given $A$. We thus fix a separable, infinite-dimensional, simple, unital C*-algebra $A$ with unique trace $\tau$ and strict comparison with respect to $\tau$. We choose a free ultrafilter  $\cU$ over $\N$. 

Let $M=\pi_{\tau}(A)''$, which is a $\mathrm{II}_1$ factor with separable predual. Let $M^{\cU}$ denote the tracial ultrapower of $M$.

Recall that we write $\cS_{\tau}=C^*(A,s_{\tau})$,
and that we denote by $\tau_{\Omega}$ the tracial state on $\cS_{\tau}$ induced by the vacuum state. 
By the reduced-free-product description above and \cite[Theorem~2]{dykema}, $\cS_{\tau}$ is a simple C*-algebra
with unique trace $\tau_{\Omega}$.

The following lemma is an immediate consequence of Popa's theorem \cite{Popa}.

\begin{lemma}
There is a unital trace-preserving embedding $\iota\colon \cS_{\tau}\to M^{\cU}$ fixing $A$.	
\end{lemma}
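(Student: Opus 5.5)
Popa's theorem states that for a $\mathrm{II}_1$ factor $M$ with separable predual, $M'\cap M^{\cU}$ contains a Haar unitary $u$ such that $\{u^kMu^{-k}\}_{k\in\mathbb Z}$ are free in $M^{\cU}$. In particular, $M$ and $uMu^*$ are free in $(M^{\cU},\tau^{\cU})$, or at least the pair $M$, $u$ gives freeness of $M$ and $W^*(u)$ after a standard argument. The plan is to extract from this a semicircular element in $M^{\cU}$ that is $*$-free from $M$, and then invoke the universal property of the reduced free product.

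\textbf{Step 1: a free semicircular element.} From the Haar unitary $u$ free from $M$, apply continuous functional calculus: choose a continuous function $g\colon\mathbb T\to[-2,2]$ that pushes Haar measure forward to the semicircle law. This is possible because Haar measure is atomless, so a monotone rearrangement of the angle works; we may take $g$ continuous on the circle by using a symmetric rearrangement, e.g. $g(e^{i\theta})=F^{-1}(|\theta|/\pi)$ with $F$ the semicircle distribution function. Set $s=g(u)$. Then $s$ is self-adjoint with semicircular distribution with respect to $\tau^{\cU}$, and $C^*(s)\subseteq W^*(u)$ is free from $M\supseteq \pi_\tau(A)\cong A$. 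If one prefers to use directly the free family $uMu^*$, then alternatively take a semicircular element in $M$ (which exists since $M$ is a $\mathrm{II}_1$ factor and so contains a diffuse abelian subalgebra) and conjugate it by $u$. Either route gives a semicircular $s\in M^{\cU}$ free from $A$.

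\textbf{Step 2: the embedding.} Consider $A$ (identified with $\pi_\tau(A)\subseteq M\subseteq M^{\cU}$, which is faithful since $A$ is simple) together with $s$. Since $A$ and $C^*(s)$ are free with respect to the faithful trace $\tau^{\cU}$, with $\tau^{\cU}|_A=\tau$ and $\tau^{\cU}|_{C^*(s)}=\sigma$, the universal property of reduced free products gives a state-preserving $*$-isomorphism
$$
(A,\tau)*(C([-2,2]),\sigma)\cong C^*(A,s)\subseteq M^{\cU}.
$$
This holds because the GNS representation of $\tau^{\cU}$ restricted to $C^*(A,s)$ is faithful, and free independence identifies the GNS space with the free product Hilbert space. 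Composing with the isomorphism $(\cS_\tau,\tau_\Omega)\cong (A,\tau)*(C([-2,2]),\sigma)$ from the preliminaries yields $\iota$, which is unital, trace-preserving and fixes $A$. Injectivity also follows independently from the simplicity of $\cS_\tau$ noted above.

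\textbf{Main obstacle.} The only real work is quoting Popa's theorem in the correct form, namely that it yields freeness of $M$ from an element commuting with or conjugating $M$ inside $M^{\cU}$, and then checking that a semicircular element can be manufactured inside the free copy. The functional-calculus or conjugation step handles the latter. Everything else is the standard identification of the C*-algebra generated by free subalgebras with their reduced free product under a faithful trace.
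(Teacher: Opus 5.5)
Your proof is correct, and the conjugation option in Step 1 together with Step 2 is exactly the paper's argument: conjugate a semicircular element of $M$ by Popa's Haar unitary to get a semicircular element free from $M\supseteq A$, then use faithfulness of $\tau^{\cU}$ to identify $C^*(A,s)$ with $(A,\tau)*(C([-2,2]),\sigma)\cong\cS_\tau$ (your functional-calculus option $s=g(u)$ is an equally valid and slightly more direct variant). One correction: Popa's unitary lies in $M^{\cU}$, not in $M'\cap M^{\cU}$, since a unitary commuting with $M$ gives $u^kMu^{-k}=M$, which cannot be free from itself; the form the paper invokes is that $u$ is $*$-free from $M$, which implies that $M$ and $uMu^*$ are free (not conversely), so it supports both of your options.
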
	 

\begin{proof}
Choose a semicircular element $s\in M$ and, using Popa's theorem \cite{Popa}, a Haar unitary
$u\in M^{\cU}$ that is freely independent from $M$. Set $s_1=u^{-1}su$. Then $A\subseteq M$ and $C^*(1,s_1)$ are freely independent in $M^{\cU}$. Since the trace $\tau_{M^{\cU}}$ is faithful on $M^{\cU}$, 
$$
C^*(A,s_1)\cong A*C^*(1,s) 
\cong \cS_{\tau},
$$
where the free products are reduced.
\end{proof}		

Let $\cD=\cD(A,\cS_{\tau})$, and let $t\in \cD$ denote the identity on $[0,1]$. 
We regard $\cD$ endowed with the trace $\tau_\Omega\circ \ev_1$.

By \cite[Theorem 3.3]{KR}, we have an onto map $p\colon A^{\cU}\to M^{\cU}$ whose kernel is the trace-kernel ideal $J_\tau\lhd A^{\cU}$ formed by all $x\in A^{\cU}$ such that $\tau^{\cU}(x^*x)=0$.
By \cite[Proposition~4.6 and Remark~4.7]{KR}, $J_\tau$ is a $\sigma$-ideal.

\begin{lemma}
There is a unital $*$-homomorphism $\Theta\colon \cD\to A^{\cU}$ fixing $A$ and such that $p\circ\Theta=\iota \circ \ev_1$ and $\tau^{\cU}\circ\Theta=\tau_\Omega\circ \ev_1$. 
In other words, the following diagram is commutative and all the $*$-homomorphisms are trace preserving: 
\begin{equation}\label{diagram}
\begin{tikzcd}
A \arrow[r,hook] \arrow[d,equal]
& \cD \arrow[r,"\ev_1"] \arrow[d,"\Theta"]
& \cS_{\tau} \arrow[d,hook,"\iota"] \\
A \arrow[r,hook]
& A^{\cU} \arrow[r,"p"]
& M^{\cU}.
\end{tikzcd}
\end{equation}
\end{lemma}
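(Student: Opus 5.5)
We apply Proposition~\ref{prop:relative-cone-lift} with $Q=A^{\cU}$, $J=J_\tau$, quotient map $p\colon A^{\cU}\to M^{\cU}$, the subalgebra $A\subseteq A^{\cU}$ (embedded via constant sequences), and $C=\iota(\cS_\tau)\subseteq M^{\cU}$.

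\textbf{Checking the hypotheses.} The following facts are needed.
\begin{enumerate}[(i)]
\item $J_\tau$ is a $\sigma$-ideal, by \cite[Proposition~4.6 and Remark~4.7]{KR}.
\item $A$ and $C$ are separable and unital. For $C$ this holds because $\cS_\tau$ is generated by the separable algebra $A$ and one element, and $\iota$ is unital.
\item $A\cap J_\tau=\{0\}$. If $a\in A$ satisfies $\tau(a^*a)=0$, then $a=0$, since $\tau$ is faithful on the simple algebra $A$.
\item $p(A)\subseteq C$. The map $p$ restricted to $A$ is the canonical inclusion $A\subseteq M\subseteq M^{\cU}$, and $\iota$ fixes $A$, so $p(a)=\iota(a)\in C$.
\end{enumerate}
Under the identification of $A$ with $p(A)=\iota(A)$, the map $\iota\colon\cS_\tau\to C$ is an isomorphism fixing $A$. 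It therefore induces an isomorphism $\hat\iota\colon\cD=\cD(A,\cS_\tau)\to\cD(A,C)$ by pointwise composition, and $\hat\iota$ fixes $A$.

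\textbf{Constructing $\Theta$.} Proposition~\ref{prop:relative-cone-lift} yields a unital $*$-homomorphism $\Theta'\colon\cD(A,C)\to A^{\cU}$ that fixes $A$ and satisfies $p\circ\Theta'=\ev_1$. Set $\Theta=\Theta'\circ\hat\iota$. Then $\Theta$ fixes $A$, and
\[
p\circ\Theta=\ev_1\circ\hat\iota=\iota\circ\ev_1,
\]
so the diagram \eqref{diagram} commutes.

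\textbf{Trace preservation.} The only remaining point is $\tau^{\cU}\circ\Theta=\tau_\Omega\circ\ev_1$. Let $\tau_{M^{\cU}}$ denote the trace on $M^{\cU}$. The key identity is
\[
\tau^{\cU}=\tau_{M^{\cU}}\circ p .
\]
This holds because both sides are the limit along $\cU$ of $\tau(a_n)$, the trace on $M$ extending $\tau$; it is the content of the identification in \cite[Theorem~3.3]{KR}. Since $\iota$ is trace-preserving, for $f\in\cD$ we get
\[
\tau^{\cU}(\Theta(f))=\tau_{M^{\cU}}(\iota(f(1)))=\tau_\Omega(f(1)).
\]
The same computation shows that $p$ and $\iota$ are trace-preserving, so all the maps in \eqref{diagram} preserve the traces.

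I expect no serious obstacle. The step most in need of care is checking that the identification of $A$ inside $C$ used in Proposition~\ref{prop:relative-cone-lift} matches the one given by $\iota$, which holds because $\iota$ fixes $A$.
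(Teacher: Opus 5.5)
Your proposal is correct and follows the paper's proof: apply Proposition~\ref{prop:relative-cone-lift} with $Q=A^{\cU}$, $J=J_\tau$ and $C=\iota(\cS_\tau)$, then obtain trace preservation by applying $\tau_{M^{\cU}}$ to $p\circ\Theta=\iota\circ\ev_1$ and using that $\iota$ is trace preserving. You also spell out the hypothesis checks the paper leaves implicit ($A\cap J_\tau=\{0\}$ by faithfulness of $\tau$ on simple $A$, and transporting along $\iota$ via $\hat\iota$).
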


\begin{proof}
Identify $\cS_{\tau}$ with $\iota(\cS_{\tau})\subseteq M^{\cU}=A^{\cU}/J_\tau$ and apply Proposition \ref{prop:relative-cone-lift}.
The equality $\tau^{\cU}\circ\Theta=\tau_\Omega\circ \ev_1$ follows by applying $\tau_{M^{\cU}}$ in $p\circ\Theta=\iota \circ \ev_1$ and using that $\iota$ is trace preserving.
\end{proof}

\section{Part II}
Set $B=\Theta(\cD)$ and $e=\Theta(1-t^2)\in B_+\cap J_\tau$. Note that, since
$\cD$ is separable, so is $B$.

\begin{lemma}
For every finite set $\Sigma\subset B$ and $\epsilon>0$ there exist positive contractions $c,c'\in B$  such that $cc'=c'$, $\tau^{\cU}(c')>0$, and 
$$
\|cbc - \tau^{\cU}(b)c^2\|<\epsilon,\qquad b\in \Sigma.
$$	
\end{lemma}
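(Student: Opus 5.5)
The plan is to reduce the statement to a ``pinching'' statement inside $\cS_{\tau}$, using the continuity of the cone variable near $t=1$. After that reduction, the remaining work is a property of the simple, monotracial algebra $\cS_{\tau}$.

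\textbf{Reduction to $\cS_\tau$.} Write $\Sigma=\Theta(\Sigma_0)$ with $\Sigma_0\subset\cD$ finite. Recall that $\tau^{\cU}\circ\Theta=\tau_\Omega\circ\ev_1$. Since $\Theta$ is a contractive $*$-homomorphism, it suffices to find positive contractions $\tilde c,\tilde c'\in\cD$ such that
\begin{itemize}
\item $\tilde c\tilde c'=\tilde c'$,
\item $\tau_\Omega(\tilde c'(1))>0$,
\item $\|\tilde c f\tilde c-\tau_\Omega(f(1))\tilde c^2\|<\epsilon$ for $f\in\Sigma_0$.
\end{itemize}
Then we set $c=\Theta(\tilde c)$ and $c'=\Theta(\tilde c')$. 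To build $\tilde c$, choose $\delta>0$ with $\|f(t)-f(1)\|<\epsilon/3$ for $t\in[1-\delta,1]$ and $f\in\Sigma_0$. Take continuous functions $g,g'\colon[0,1]\to[0,1]$, supported in $[1-\delta,1]$, with $gg'=g'$ and $g'(1)=1$. Put $\tilde c=g(t)d$ and $\tilde c'=g'(t)d'$, where $d,d'$ are positive contractions in $\cS_\tau$. These vanish at $0$, so they lie in $\cD$. Then $\tilde c\tilde c'=\tilde c'$ as soon as $dd'=d'$. Also
$$\tilde c f\tilde c=g(t)^2\,d f(t) d\approx_{\epsilon/3} g(t)^2\, d f(1) d .$$
So everything reduces to the following claim: for every finite set $F\subset\cS_\tau$ and $\eta>0$ there are positive contractions $d,d'\in\cS_\tau$ with $dd'=d'$, $\tau_\Omega(d')>0$, and $\|dxd-\tau_\Omega(x)d^2\|<\eta$ for $x\in F$.

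\textbf{The claim in $\cS_\tau$.} By \cite{dykema}, $\cS_\tau\cong(A,\tau)*(C([-2,2]),\sigma)$ is simple with unique trace, and I would use this structure. Replacing $x$ by $x-\tau_\Omega(x)1$, we may assume $\tau_\Omega(x)=0$. The task is then to make $dxd$ small while keeping $d$ ``large in trace''.
\begin{itemize}
\item First I would use the Dixmier property of reduced free products (Haagerup--Zsid\'o type averaging). Since $\cS_\tau$ has a unique trace, it gives unitaries $u_1,\dots,u_n\in\cS_\tau$ with $\|\frac1n\sum_i u_i x u_i^*\|<\eta$ for all $x\in F$.
\item Second, I would convert this average into a compression. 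Using $s_\tau$, whose spectral distribution is diffuse, pick mutually orthogonal positive contractions $f_1,\dots,f_n\in C^*(s_\tau)$ of equal trace, plus slightly smaller $f_i'$ with $f_if_i'=f_i'$.
\item Then look for $d$ of the form $d=|\sum_i f_i u_i|$, or a symmetrized variant.
\end{itemize}

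\textbf{Main obstacle.} The second bullet is the difficult step: the cross terms $u_i^*f_i x f_j u_j$ with $i\neq j$ must be killed, and $d$ must be genuinely positive with a unit-like $d'$ below it. My first attempt would be to choose the $f_i$ asymptotically free from $F$ and the $u_i$. Inside $\cS_\tau$ I would realize them via a fresh semicircular built from high spectral pieces of $s_\tau$, or pass to $\cS_\tau^{\cU}$ and use freeness there. I would then estimate the cross terms by free-probability norm bounds, in the style of Haagerup's inequality for free products. If that fails, the fallback is to use Pop's result that trace-zero elements in a simple monotracial algebra are sums of few commutators. Compressing by $d$ chosen from a ``large'' free semicircular piece should then make each commutator small in norm.
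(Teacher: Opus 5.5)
Your reduction from $B$ to $\cS_{\tau}$ is correct and is the same as the paper's: lift $\Sigma$ to $\cD$, cut down by $g(t)$, $g'(t)$ supported near $t=1$, and use $\tau^{\cU}\circ\Theta=\tau_\Omega\circ\ev_1$. The gap is the claim you reduce to. It carries the whole content of the lemma, and you leave it unproved.

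The Dixmier property (Haagerup--Zsid\'o) does hold in $\cS_{\tau}$. But it produces an average $\frac1n\sum_i u_ixu_i^*$ of unitary conjugates, not a compression $dxd$ by a single positive element, and your conversion fails as written. For $z=\sum_i f_iu_i$:
\begin{enumerate}[(a)]
\item the cross terms $u_i^*f_ixf_ju_j$ of $z^*xz$ are not killed by orthogonality of the $f_i$, because the $f_i$ do not commute with $x$;
\item the diagonal terms $u_i^*f_ixf_iu_i$ are not conjugates of $x$;
\item in any case $|z|\,x\,|z|\neq z^*xz$ in general.
\end{enumerate}
The usual way to turn an average into a compression uses matrix units approximately commuting with $F$, and nothing in your outline produces these in $\cS_{\tau}$.

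Choosing the $f_i$ (or $d$) asymptotically free from $F$ controls moments, not norms. You would need norm-level freeness inside $\cS_{\tau}$ or $\cS_{\tau}^{\cU}$; in the latter the trace is not faithful, so freeness with respect to it gives no norm bounds. That is essentially a selflessness statement for $\cS_{\tau}$, which is exactly the kind of input the paper avoids. The commutator fallback is likewise only a hope: nothing you have makes $d[y,w]d$ small in norm, and producing such a $d$ is essentially the claim itself.

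The missing idea is excision of states in the sense of Akemann--Anderson--Pedersen. Since $\cS_{\tau}$ is simple, unital and infinite-dimensional, its trace $\tau_\Omega$ can be excised. That is, for each finite $F\subset\cS_{\tau}$ and $\eta>0$ there is a positive $d\in\cS_{\tau}$ with $\|d\|=1$ and $\|dxd-\tau_\Omega(x)d^2\|<\eta$ for $x\in F$. This is what the paper uses, and no averaging or free-probabilistic estimate is needed.

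The element $d'$ then comes from functional calculus. Let $\chi$ equal $0$ on $[0,1/2]$, $1$ on $[3/4,1]$, and be linear in between, and put $d_1=\chi(d)$ and $d_1'=(d-3/4)_+$. Then:
\begin{enumerate}[(a)]
\item $d_1d_1'=d_1'$;
\item $d_1'\neq0$ because $\|d\|=1$, which is where the norm-one normalization is essential;
\item $\tau_\Omega(d_1')>0$ because $\tau_\Omega$ is faithful on the simple algebra $\cS_{\tau}$;
\item writing $\chi(s)=s\,\xi(s)$ with $\|\xi\|_\infty\le2$ gives $d_1xd_1-\tau_\Omega(x)d_1^2=\xi(d)\bigl(dxd-\tau_\Omega(x)d^2\bigr)\xi(d)$, which has norm at most $4\eta$.
\end{enumerate}
Feeding $d_1,d_1'$ into your reduction completes the proof.
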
	

\begin{proof}
Let $\Sigma=\{b_1,\ldots,b_m\}$. Choose lifts $f_k\in \cD$ of each $b_k$. Since $\cS_{\tau}$ is a simple C*-algebra, its trace $\tau_\Omega$ can be excised by positive contractions \cite[Proposition~2.3]{AAP}. Thus, there exists a positive $d\in \cS_{\tau}$ of norm 1 such that 
$$
\|df_k(1)d-\tau_\Omega(f_k(1))d^2\|<\epsilon/8,\qquad k=1,\ldots,m.
$$
We modify $d$ via functional calculus: choose $\chi\in C_0((0,1])$ that is zero on $[0,1/2]$ and $1$ on $[3/4,1]$, and linear on $[1/2,3/4]$, and then set
$$
d_1=\chi(d),\qquad d_1'=(d-3/4)_+.
$$
Then $d_1d_1'=d_1'$ and $d_1'\neq 0$. Writing $\chi(t)=t\xi(t)$ with $\|\xi\|\leq 2$, the preceding estimate gives
$$
\|d_1f_k(1)d_1-\tau_\Omega(f_k(1))d_1^2\|<\epsilon/2.
$$
Now pick $0\leq t_0<1$ such that 
$\|f_k(1)-f_k(t)\|<\epsilon/2$
for all $t\in [t_0,1]$.
Pick $g,g'\in C_0((0,1])$ supported in $[t_0,1]$, such that $0\leq g,g'\leq 1$,  $gg'=g'$ and $g(1)=g'(1)=1$. Set 
$$
c=\Theta(g(t)d_1),\qquad c'=\Theta(g'(t)d_1').
$$
Then $cc'=c'$, $c$ satisfies the inequality of the lemma,  and $\tau^{\cU}(c')=\tau_\Omega(d_1')>0$,
since $d_1'\neq 0$ and $\tau_\Omega$ is faithful
on $\cS_{\tau}$.
\end{proof}	

\begin{lemma}
There exists $r\in A^{\cU}$ such that
$$
r^*br=\tau^{\cU}(b)e,\qquad b\in B.
$$
\end{lemma}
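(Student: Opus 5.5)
Fix a countable dense set $\{b_1,b_2,\dots\}\subseteq B$. The goal is to build, for each $n$, an element $r_n\in A^{\cU}$ with $\|r_n^*b_kr_n-\tau^{\cU}(b_k)e\|<1/n$ for $k\le n$. Since all conditions are countably many norm estimates in an ultrapower, a standard reindexing (diagonal) argument, i.e.\ countable saturation of $A^{\cU}$, then produces a single $r$ satisfying $r^*b_kr=\tau^{\cU}(b_k)e$ for all $k$. By density and continuity the identity extends to all of $B$; the map $b\mapsto r^*br-\tau^{\cU}(b)e$ is bounded once $\|r\|$ is bounded uniformly, and I will arrange $\|r_n\|\le \|e\|^{1/2}+1$ or similar.

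For fixed $n$, apply the previous lemma to $\Sigma=\{b_1,\dots,b_n\}$ and a small $\epsilon$. This gives $c,c'\in B$ with $cc'=c'$, $\tau^{\cU}(c')>0$, and $\|cb_kc-\tau^{\cU}(b_k)c^2\|<\epsilon$. Put $r_n=cv$ for some $v\in A^{\cU}$ to be chosen with $v^*c^2v$ close to $e$. Then
$$r_n^*b_kr_n=v^*cb_kcv\approx \tau^{\cU}(b_k)v^*c^2v\approx\tau^{\cU}(b_k)e.$$
The whole task therefore reduces to finding $v$, of controlled norm, with $v^*c^2v\approx e$, i.e.\ to showing $e\precsim c^2$ in the Cuntz sense inside $A^{\cU}$, with a norm bound on the implementing element.

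That comparison is the main step, and strict comparison enters here. Because $e\in J_\tau$, every trace on $A^{\cU}$ that factors through $\tau^{\cU}$ has $d_{\tau^{\cU}}(e)=0$. Meanwhile $c^2\ge c'^2$ (from $cc'=c'$), and $c'$ has a spectral part of positive trace: in fact $(c^2-1/2)_+$ dominates something with trace at least $\tau^{\cU}(c')^2>0$. I would use the fact that strict comparison of $A$ with respect to $\tau$ passes to $A^{\cU}$ with respect to limit traces, in its usual quantitative form. For elements of $M_\infty(A)$ with $d_\tau(a)<d_\tau(b)-\delta$, comparison holds with uniform control, and since $A$ is simple and monotracial every quasitrace of $A$ is $\tau$. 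The cleanest version is to realize $e$ and $c$ by representative sequences $(e_i),(c_i)$. Along $\cU$, $\limsup d_\tau((e_i-\eta)_+)$ tends to $0$, because $\tau^{\cU}(e)=0$ means $\tau(f(e_i))\to 0$ for continuous $f$ vanishing at $0$. Also $d_\tau((c_i^2-1/2)_+)\ge\delta>0$. Strict comparison in $A$ then gives $v_i$ with $v_i^*(c_i^2-1/2)_+v_i\approx(e_i-\eta)_+$. Using $(c_i^2-1/2)_+\le c_i^2$ and taking a suitable square-root-type correction, we get $w_i$ with $w_i^*c_i^2w_i\approx (e_i-\eta)_+$ and $\|w_i\|$ bounded by roughly $2\|e\|^{1/2}$, since the relevant piece of $c_i^2$ is bounded below by $1/2$.

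The expected obstacle is exactly this norm control, together with getting an approximate equality $v^*c^2v\approx e$ rather than only $\le$. I would handle it in three steps. First, apply comparison to $(c^2-1/2)_+$ and write $c^2=g(c^2)(c^2-1/2)_+$ locally with $g$ bounded by $2$. Second, use $e\approx(e-\eta)_+$ with error $\eta$. Third, use the standard Rørdam-lemma normalization: from $x\precsim y$ one gets $z$ with $z^*z=(x-\eta)_+$ and $zz^*\in\overline{yAy}$, and then $v=y^{-1/2}$-type compression on the spectral region where $y\ge1/2$ gives the bounded implementer. Finally, let $\epsilon,\eta\to0$ with $n$ and invoke saturation to obtain exact equality.
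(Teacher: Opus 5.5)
Your argument is correct, and its skeleton is the paper's: compress by the $c$ of the previous lemma, use strict comparison to dominate $e$, and finish with Kirchberg's $\epsilon$-test. The difference is in how you obtain an implementer of controlled norm.

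The paper compares $e$ with $c'$ rather than with a spectral cut of $c^2$. Strict comparison of $A^{\cU}$ with respect to $\tau^{\cU}$, quoted from the literature, gives $v\in\overline{c'A^{\cU}}$ with $\|v^*v-e\|<\delta$. Since $cc'=c'$, one has $cv=v$, so $r=v$ works directly via $v^*bv-\tau^{\cU}(b)e=v^*(cbc-\tau^{\cU}(b)c^2)v+\tau^{\cU}(b)(v^*v-e)$. The bound $\|v\|^2<1+\delta$ comes for free.

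You instead compare $(e_i-\eta)_+$ with $(c_i^2-1/2)_+$ on representative sequences in $A$. You then renormalize the R{\o}rdam element $z$ by $v=h(c^2)z$, e.g.\ with $h(x)=\min(\sqrt2,x^{-1/2})$. Since $xh(x)^2=1$ for $x\ge 1/2$ and $z\in\overline{(c^2-1/2)_+A^{\cU}}$, this gives $v^*c^2v=z^*z$ and $\|v\|\le\sqrt2\,\|z\|$. In fact $cv=z$, so your $r$ is again just a R{\o}rdam element, now in the right ideal of $(c^2-1/2)_+$ instead of that of $c'$.

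What each route buys:
\begin{itemize}
\item Your route uses $c'$ only to certify $\lim_{\cU}d_\tau((c_i^2-1/2)_+)>0$, so it does not need the exact relation $cc'=c'$.
\item It also proves by hand the instance of strict comparison in $A^{\cU}$ that it uses, rather than citing it.
\item The price is the extra functional calculus, which the paper's formulation of the previous lemma (with $cc'=c'$) was designed to avoid.
\end{itemize}

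Two small corrections. First, your first step, $c^2=g(c^2)(c^2-1/2)_+$ with $g\le 2$, is not right as written, since $x/(x-1/2)$ is unbounded near $1/2$. The identity you actually need is $k(c^2)(c^2-1/2)_+=(c^2-1/2)_+$ with $k(x)=xh(x)^2$. Second, the claim that every quasitrace on $A$ equals $\tau$ is not known without exactness. It is irrelevant here, because strict comparison is assumed directly with respect to $\tau$.
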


\begin{proof}
Let $\Sigma\subset B$ be a finite set with $1\in\Sigma$ and let $\epsilon>0$.
Let $\eta>0$. Let $c,c'\in B$ be supplied by the previous lemma such that 
$$
\|cbc - \tau^{\cU}(b)c^2\|<\eta,\qquad b\in \Sigma.
$$
Since $A$ has strict comparison by $\tau$, its ultrapower $A^{\cU}$ has strict comparison by $\tau^{\cU}$ (\cite[Lemma 1.23]{bosaetal}, \cite[Theorem 8.2.2]{farahetal}). From
$$
d_{\tau^{\cU}}(e)=0<\tau^{\cU}(c')\leq d_{\tau^{\cU}}(c'),
$$
we deduce that $e\precsim_{\mathrm{Cu}}c'$. Thus, for every $\delta>0$ there exists
$v\in A^{\cU}$ such that
$$
\|e-v^*v\|<\delta,\qquad v\in\overline{c'A^{\cU}}.
$$
Since $cc'=c'$, we have $cv=v$. Moreover,
$\|v\|^2<1+\delta$.

For $b\in\Sigma$,
$$
v^*bv-\tau^{\cU}(b)e
=
v^*(cbc-\tau^{\cU}(b)c^2)v
+(v^*v-e)\tau^{\cU}(b).
$$
Thus,
$$
\|v^*bv-\tau^{\cU}(b)e\|
\leq
\eta\|v\|^2
+\delta|\tau^{\cU}(b)|.
$$
Choosing $\eta$ and $\delta$ sufficiently small, the right-hand side can be made arbitrarily small, uniformly for $b\in\Sigma$.

The conclusion now follows from Kirchberg's $\epsilon$-test \cite{kirchberg-abel}.
\end{proof}

\section{Part III}

Recall that in Part I we have obtained a $*$-homomorphism $\Theta\colon \cD\to A^{\cU}$ fixing $A$ and such that 
$$
\tau^{\cU}\circ \Theta=\tau_\Omega\circ\ev_1.
$$
Recall that in Part II we have obtained $r\in A^{\cU}$ such that
\begin{equation}\label{rstarThetar}
r^*\Theta(f)r=\tau^{\cU}(\Theta(f))e,\qquad f\in \cD,
\end{equation}
where $e=\Theta(1-t^2)$.

Let us define a cp map  $\phi\colon \cD\to \cD$ by
$$
\phi(f)=\tau_\Omega(f(1))(1-t^2).
$$
Let $\cX$ denote the C*-correspondence over $\cD$ obtained from the KSGNS construction applied  to $\phi$ (see \cite{brown-ozawa}). Let $\xi\in \cX$ be its canonical generator (i.e., the class of $1\otimes 1$).

We note that
$$
\cX\otimes_{\cD}\cX=0.
$$
Indeed, this readily follows from the fact that inner products in $\cX$ take values in the two-sided ideal $(1-t^2)\cD$, which is contained in the kernel of $\phi$. Consequently, the Fock space of $\cX$ is $\cF(\cX)=\cD\oplus \cX$. Let $\cT(\cX)$ be the Toeplitz-Pimsner
C*-algebra associated to $\cX$ and $\ell_{\xi}\in \cT(\cX)$ the left creation operator associated to $\xi$.

\begin{lemma}
There exists $\Psi\colon \cT(\cX)\to A^{\cU}$ extending $\Theta$ and such that $\Psi(\ell_{\xi})=r$.
\end{lemma}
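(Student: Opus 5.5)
The plan is to invoke the universal property of the Toeplitz--Pimsner algebra recalled in the preliminaries, in the generality of \cite[Proposition~1.3]{fowler-raeburn}: for a unital $*$-homomorphism $\theta\colon\cD\to B$ and an element $T\in B$ with $T^*\theta(f)T=\theta(\phi(f))$ for all $f\in\cD$, there is a unique $*$-homomorphism $\cT(\cX)\to B$ extending $\theta$ and sending $\ell_\xi$ to $T$. The preliminaries state this for a cp map from an algebra to itself with its KSGNS correspondence, and $\cX$ is exactly that construction for $\phi\colon\cD\to\cD$. So we take $B=A^{\cU}$, $\theta=\Theta$, $T=r$, and the whole proof reduces to verifying the covariance relation
$$
r^*\Theta(f)r=\Theta(\phi(f)),\qquad f\in\cD.
$$

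The verification is a direct computation using \eqref{rstarThetar} and the trace compatibility from Part I. Let $f\in\cD$. By \eqref{rstarThetar}, $r^*\Theta(f)r=\tau^{\cU}(\Theta(f))e$. By the second lemma of Part I, $\tau^{\cU}(\Theta(f))=\tau_\Omega(f(1))$. Since $e=\Theta(1-t^2)$ and $\Theta$ is linear, the right-hand side equals $\Theta\big(\tau_\Omega(f(1))(1-t^2)\big)=\Theta(\phi(f))$. Hence the covariance relation holds.

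It remains to confirm that the assignments $\xi\mapsto r$, $f\mapsto\Theta(f)$ define a representation of the correspondence $\cX$. Here we use the standard map $f_1\otimes f_2\mapsto \Theta(f_1)r\Theta(f_2)$. It preserves inner products, because
$$
\Theta(f_2)^*r^*\Theta(f_1^*f_1')r\Theta(f_2')=\Theta\big(f_2^*\phi(f_1^*f_1')f_2'\big)
$$
by the covariance relation. Therefore it descends to the separation and completion of $\cD\odot\cD$, and it is compatible with the left and right actions. This is exactly the mechanism behind the universal property as quoted, so no additional argument is needed.

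Two points might have looked like obstacles but are not. The first is that $\Theta$ is unital, which Part I provides. The second is that $\cX\otimes_\cD\cX=0$, so the Fock space is $\cD\oplus\cX$. This places no constraint on the Toeplitz (as opposed to Cuntz--Pimsner) universal property, because the Toeplitz algebra is universal for all covariant Toeplitz representations. For the same reason, no condition of the form $r^*\Theta(f')^* r\,\cdot\,r^*\Theta(f)r$ needs to vanish.

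The only real step is therefore the bookkeeping identity $\tau^{\cU}\circ\Theta=\tau_\Omega\circ\ev_1$. It turns the scalar $\tau^{\cU}(\Theta(f))$ in \eqref{rstarThetar} into the scalar $\tau_\Omega(f(1))$ that appears in $\phi$, and this is what makes $\phi$ the right cp map.
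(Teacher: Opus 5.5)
Your proof is correct and is essentially the paper's argument: you rewrite \eqref{rstarThetar} as $r^*\Theta(f)r=\Theta(\phi(f))$ using $\tau^{\cU}\circ\Theta=\tau_\Omega\circ\ev_1$ and $e=\Theta(1-t^2)$, then invoke the universal property of $\cT(\cX)$, and your inner-product computation just unpacks why that property applies. As a minor aside, the relation actually forced by $\cX\otimes_{\cD}\cX=0$ is $r\Theta(f)r=0$; the product $r^*\Theta(f')^*r\,r^*\Theta(f)r=\Theta(\phi(f'^*)\phi(f))$ you mention plays no role. That relation holds automatically, since $r^*r=e$ gives $\|r\Theta(f)r\|^2=\|\Theta(\phi(f^*(1-t^2)f))\|=0$.
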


\begin{proof}
This follows from the universal property of $\cT(\cX)$. Indeed, we can rewrite \eqref{rstarThetar} as
$$
r^*\Theta(f)r=\Theta(\phi(f)).
$$
Thus, mapping $\xi\mapsto r$ extends to a representation of $\cX$ in $A^{\cU}$ with underlying $*$-homomorphism  $\Theta$. By the universal property of $\cT(\cX)$, we get $\Psi$.
\end{proof}

We will show next that there exists a unital $*$-homomorphism 
$\cS_{\tau}\to\cT(\cX)$ fixing $A$. Composing it with $\Psi$
yields the desired $*$-homomorphism of $\cS_{\tau}$ into $A^{\cU}$ witnessing selflessness of $A$.

Recall that $\cT_{\tau}$ denotes the Toeplitz-Pimsner algebra associated to $\tau$. Define 
$$
\widetilde \cD=\{f\in C([0,1],\cT_{\tau}):f(0)\in A\}.
$$
Since $\cS_{\tau}\subseteq \cT_{\tau}$,  we have $\cD\subseteq \widetilde \cD$. Let $\widetilde \phi\colon \widetilde \cD\to \widetilde \cD$ be the cp map defined by
$$
\widetilde\phi(f)=\tau_{\Omega}(f(1))(1-t^2),\qquad f\in \widetilde\cD.
$$
Here $\tau_{\Omega}$ denotes the vacuum state on $\cT_{\tau}$, which extends the trace $\tau_{\Omega}$ on $\cS_{\tau}$.
It is clear that $\widetilde \phi$ extends $\phi$. Let $\widetilde \cX$ denote the C*-correspondence over $\widetilde \cD$ 
obtained from the KSGNS construction applied to the cp map $\widetilde\phi$. Let $\widetilde\xi\in\widetilde\cX$ be its canonical generator. As before, we have
 $$
\widetilde\cX\otimes_{\widetilde \cD}\widetilde\cX = 0.
 $$
Thus, $\cF(\widetilde\cX)=\widetilde\cD\oplus \widetilde \cX$.

\begin{lemma}
The assignment $f\xi g\mapsto f\widetilde \xi g$, for $f,g\in \cD$, extends to a covariant embedding $\cX\to \widetilde\cX$ with underlying
$*$-homomorphism the inclusion $\cD\to\widetilde\cD$.
The assignment $f\mapsto f$, $\ell_{\xi}\mapsto \ell_{\widetilde\xi}$ extends to an embedding of $\cT(\cX)$
into $\cT(\widetilde\cX)$.
\end{lemma}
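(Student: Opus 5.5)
First I will show that the map on $\cX$ is isometric. On elementary tensors it sends $f\otimes g\mapsto f\otimes g$, that is, $f\xi g\mapsto f\widetilde\xi g$. For finite sums $\sum_i f_i\otimes g_i$ with $f_i,g_i\in\cD$, the inner products agree:
$$
\Big\langle \sum_i f_i\otimes g_i,\sum_j f_j\otimes g_j\Big\rangle_{\cX}=\sum_{i,j} g_i^*\phi(f_i^*f_j)g_j=\sum_{i,j}g_i^*\widetilde\phi(f_i^*f_j)g_j,
$$
because $\widetilde\phi$ extends $\phi$ and $\cD\subseteq\widetilde\cD$. Hence the assignment preserves the $\cD$-valued inner product. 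It therefore descends through the separation (null vectors go to null vectors) and extends by continuity to an isometric map $V\colon\cX\to\widetilde\cX$. By construction $V(f\eta g)=fV(\eta)g$ for $f,g\in\cD$ and $\eta\in\cX$, and $V(\xi)=\widetilde\xi$. So $V$ is a covariant embedding with underlying map the inclusion.

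Next I will build the map on Toeplitz algebras. Using $\cX\otimes\cX=0$ and $\widetilde\cX\otimes\widetilde\cX=0$, we have $\cF(\cX)=\cD\oplus\cX$ and $\cF(\widetilde\cX)=\widetilde\cD\oplus\widetilde\cX$. The operator $\ell_{\widetilde\xi}$ maps $g\in\widetilde\cD$ to $\widetilde\xi g$ and kills $\widetilde\cX$. We get $\ell_{\widetilde\xi}^*f\ell_{\widetilde\xi}=\widetilde\phi(f)=\phi(f)$ for $f\in\cD$. The universal property of $\cT(\cX)$, applied to the inclusion $\cD\to\cT(\widetilde\cX)$ and $T=\ell_{\widetilde\xi}$, then yields a $*$-homomorphism $\pi\colon\cT(\cX)\to\cT(\widetilde\cX)$ with $f\mapsto f$ and $\ell_\xi\mapsto\ell_{\widetilde\xi}$.

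The main obstacle is injectivity. I will realize $\pi$ concretely. Let $W=\mathrm{id}_{\cD}\oplus V\colon\cF(\cX)\to\cF(\widetilde\cX)$, an isometric $\cD$-module map onto the closed submodule $\cF_0:=\cD\oplus\overline{V(\cX)}$. On $\cF(\widetilde\cX)$ the operators $f\in\cD$ act by left multiplication, and $\ell_{\widetilde\xi}$, $\ell_{\widetilde\xi}^*$ act as follows:
$$
\ell_{\widetilde\xi}(g\oplus\eta)=\widetilde\xi g\oplus 0,\qquad \ell_{\widetilde\xi}^*(g\oplus\eta)=\langle\widetilde\xi,\eta\rangle\oplus 0.
$$
For $\eta=V(\zeta)$ we get $\langle\widetilde\xi,V\zeta\rangle=\langle\xi,\zeta\rangle\in\cD$. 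Hence all generators of $\pi(\cT(\cX))$ leave $\cF_0$ invariant, and on $\cF_0$ they satisfy $\pi(x)W=Wx$ for each generator $x$, so $\pi(x)W=Wx$ for all $x\in\cT(\cX)$. Because $W$ is an isometry of Hilbert modules, $\|x\|=\|Wx\|=\|\pi(x)W\|\le\|\pi(x)\|$ for $x\in\cT(\cX)$. Thus $\pi$ is isometric, hence injective.

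Along the way I need to check that restricting adjointable operators on $\cF(\widetilde\cX)$ to $W$ yields adjointable operators on $\cF(\cX)$, even though the coefficient algebras differ. It suffices to verify the norm inequality $\|Wx\eta\|=\|x\eta\|$ and to compute $\|x\|$ as a supremum over vectors $\eta$, which avoids any adjointability issue.
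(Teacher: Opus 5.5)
Your proposal is correct and follows essentially the paper's route. You get the isometric embedding $\cX\to\widetilde\cX$ from $\widetilde\phi|_{\cD}=\phi$ (checking inner products on finite sums directly, where the paper cites a coefficient-map lemma), apply the universal property of $\cT(\cX)$, and prove injectivity by observing that $\cD\oplus\cX\subseteq\widetilde\cD\oplus\widetilde\cX$ is reducing for $\cD$ and $\ell_{\widetilde\xi}$ and carries the Fock representation there; your norm-supremum argument cleanly sidesteps the fact that this is only a $\cD$-submodule.

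The one slip is notational: $\ell_{\widetilde\xi}(g\oplus\eta)=0\oplus\widetilde\xi g$, since it lands in $\widetilde\cX$ rather than $\widetilde\cD$. This does not affect the argument.
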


\begin{proof}
Since $\cX$ is generated as a closed $\cD$-bimodule by $\xi$, by \cite[Lemma~2.1]{gaojungeetal}
it suffices to check that the assignment
$$
\xi\longmapsto \widetilde\xi
$$
preserves the corresponding coefficient map. For $f\in\cD$,
$$
\langle \widetilde\xi,f\widetilde\xi\rangle
=\widetilde\phi(f)
=\phi(f)
=\langle \xi,f\xi\rangle.
$$
Thus $\xi\mapsto\widetilde\xi$ extends to a covariant embedding
$\cX\to\widetilde\cX$ with underlying $*$-homomorphism
$\cD\hookrightarrow\widetilde\cD$.

The universal property of $\cT(\cX)$ now gives a $*$-homomorphism
$$
\cT(\cX)\longrightarrow\cT(\widetilde\cX),
\qquad
f\mapsto f,\quad
\ell_\xi\mapsto\ell_{\widetilde\xi}.
$$
To see that it is injective, note that $\cD\oplus\cX$, viewed as a submodule of
$\widetilde\cD\oplus\widetilde\cX$, is reducing for the left action of $\cD$
and for $\ell_{\widetilde\xi}$. Indeed,
$\ell_{\widetilde\xi}$ maps $\cD$ into $\cX$, while it vanishes on
$\cX$ because
$\widetilde\cX\otimes_{\widetilde\cD}\widetilde\cX=0$; similarly,
$\ell_{\widetilde\xi}^*$ maps $\cX$ into $\cD$ and vanishes on
$\cD$. Moreover, under the identification
$\cD\oplus\cX\subseteq\widetilde\cD\oplus\widetilde\cX$, the
restrictions of the left action of $\cD$ and of
$\ell_{\widetilde\xi}$ are precisely the left action of $\cD$ and
$\ell_\xi$ on $\cF(\cX)$, i.e., the Fock
representation of $\cT(\cX)$. Hence the $*$-homomorphism is injective.
\end{proof}

Identify $\cT(\cX)$ as a C*-subalgebra of $\cT(\widetilde\cX)$ by the previous embedding and regard $\ell_{\widetilde \xi}$ as an extension
of $\ell_{\xi}$.

\begin{lemma}
Let $W\in \cT(\widetilde\cX)$ be defined as
$$
W=t\ell_{\tau} + \ell_{\widetilde \xi}.
$$
\begin{enumerate}[(i)]
\item
We have 
$$
W^*aW=\tau(a)1,\qquad a\in A,
$$
and $W+W^*\in \cT(\cX)$.
\item
The assignment $A\ni a\mapsto a$ and $\ell_{\tau}\mapsto W$ extends to a $*$-homomorphism from $\cT_{\tau}$
into $\cT(\widetilde\cX)$.
\item
The assignment $A\ni a\mapsto a$ and $s_{\tau}\mapsto W+W^*$ extends to a $*$-homomorphism from $\cS_{\tau}$
into $\cT(\cX)$.

\end{enumerate}
\end{lemma}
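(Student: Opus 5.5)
The plan is to verify the relation in (i) by a direct computation in the Fock representation of $\cT(\widetilde\cX)$ on $\cF(\widetilde\cX)=\widetilde\cD\oplus\widetilde\cX$. Parts (ii) and (iii) will then follow formally from the universal property of $\cT_\tau$. Throughout, $A$, $\ell_\tau$ and $t$ are regarded as elements of $\widetilde\cD\subseteq\cT(\widetilde\cX)$: $A$ and $\ell_\tau$ as constant functions with values in $\cT_\tau$, and $t$ as the identity function. We will use two facts. First, $\ell_{\widetilde\xi}^*\,d\,\ell_{\widetilde\xi}=\widetilde\phi(d)$ for $d\in\widetilde\cD$. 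Second, $t$ is central in $\widetilde\cD$.

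For (i), expand
$$
W^*aW=t\ell_\tau^*a\ell_\tau t+t\ell_\tau^*a\ell_{\widetilde\xi}+\ell_{\widetilde\xi}^*a\ell_\tau t+\ell_{\widetilde\xi}^*a\ell_{\widetilde\xi}.
$$
\begin{itemize}
\item The first term equals $t^2\tau(a)$, since $\ell_\tau^*a\ell_\tau=\tau(a)1$.
\item The last term equals $\widetilde\phi(a)=\tau_\Omega(a)(1-t^2)=\tau(a)(1-t^2)$.
\item The two cross terms are adjoints of each other, so it suffices to show $d\ell_{\widetilde\xi}=0$ for $d=t\ell_\tau^*a\in\widetilde\cD$.
\end{itemize}
For the cross terms, compute on $\cF(\widetilde\cX)$. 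The operator $\ell_{\widetilde\xi}$ kills $\widetilde\cX$, because $\widetilde\cX\otimes\widetilde\cX=0$, and sends $g\in\widetilde\cD$ to $\widetilde\xi g$. Hence
$$
\|d\widetilde\xi g\|^2=\|g^*\widetilde\phi(d^*d)g\|,\qquad \widetilde\phi(d^*d)=\tau_\Omega\big(a^*\ell_\tau\ell_\tau^*a\big)(1-t^2),
$$
where we used $t(1)=1$. This value is $0$: $E_\Omega(a^*\ell_\tau\ell_\tau^*a)$ is the vacuum compression, and $\ell_\tau^*$ annihilates the degree-zero part of the Fock space of $\cH_\tau$, so $\ell_\tau^*a\Omega=0$. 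Summing the terms gives $W^*aW=\tau(a)(t^2+1-t^2)=\tau(a)1$.

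For the second claim of (i), note
$$
W+W^*=t(\ell_\tau+\ell_\tau^*)+\ell_{\widetilde\xi}+\ell_{\widetilde\xi}^*=ts_\tau+\ell_\xi+\ell_\xi^*,
$$
using the identification of $\ell_\xi$ with $\ell_{\widetilde\xi}$. Here $ts_\tau$ is a continuous $\cS_\tau$-valued function that vanishes at $0$, so it lies in $\cD$. Hence $W+W^*\in\cT(\cX)$.

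For (ii), apply the universal property of the Toeplitz--Pimsner algebra $\cT_\tau=\cT(\cH_\tau)$ recalled in the preliminaries. Take $\theta$ to be the unital inclusion $A\hookrightarrow\cT(\widetilde\cX)$ and $T=W$; the covariance condition $T^*\theta(a)T=\theta(\tau(a)1)$ is exactly (i). This gives $\psi\colon\cT_\tau\to\cT(\widetilde\cX)$ with $\psi|_A=\mathrm{id}$ and $\psi(\ell_\tau)=W$.

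For (iii), restrict $\psi$ to $\cS_\tau=C^*(A,s_\tau)\subseteq\cT_\tau$. Then $\psi(s_\tau)=W+W^*$ and $\psi(a)=a$. Both $A$ and $W+W^*$ lie in the C*-subalgebra $\cT(\cX)$ by (i), so $\psi(\cS_\tau)\subseteq\cT(\cX)$. This gives the required $*$-homomorphism $\cS_\tau\to\cT(\cX)$ fixing $A$.

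The only non-formal step, and the one I expect to be the main obstacle, is the vanishing of the cross terms in (i). It rests on the vacuum identity $\tau_\Omega(a^*\ell_\tau\ell_\tau^*a)=0$ combined with the fact that $\widetilde\phi$ only reads off the value at $t=1$ through the vacuum state.
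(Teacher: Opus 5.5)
Your proposal is correct and follows the paper's proof essentially step by step. It uses the same four-term expansion and kills the cross terms via $\widetilde\phi\big(t^2a^*\ell_\tau\ell_\tau^*a\big)=\tau_\Omega(a^*\ell_\tau\ell_\tau^*a)(1-t^2)=0$, phrased as a Fock-space norm computation where the paper writes $(t\ell_\tau)^*a\ell_{\widetilde\xi}=\ell_{(t\ell_\tau)^*a\widetilde\xi}$; it then applies the universal property of $\cT_\tau$ and restricts to $\cS_\tau$. There are two harmless slips: the constant function $\ell_\tau$ is not in $\widetilde\cD$ (its value at $0$ is not in $A$), though you only use $t\ell_\tau$ and products like $t\ell_\tau^*a$, which are; and the cross terms are adjoints of each other only after replacing $a$ by $a^*$, which your argument covers since it holds for all $a\in A$.
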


\begin{proof}
(i) We have
\begin{align*}
W^*aW &=(t\ell_{\tau})^*a(t\ell_{\tau}) +
(t\ell_{\tau})^*a\ell_{\widetilde \xi}
+ \ell_{\widetilde\xi}^*a(t\ell_{\tau}) + \ell_{\widetilde\xi}^*a\ell_{\widetilde\xi}\\
&=t^2\tau(a) + 0 + 0 + (1-t^2)\tau(a)\\
&=\tau(a)1.
\end{align*}
To see that the cross terms are zero, we used that
$$
(t\ell_{\tau})^*a\ell_{\widetilde \xi}=
\ell_{(t\ell_{\tau})^*a\widetilde \xi}
$$
and that $(t\ell_{\tau})^*a\widetilde \xi=0$, which follows from
$$
\langle (t\ell_{\tau})^*a\widetilde \xi,(t\ell_{\tau})^*a\widetilde \xi\rangle 
=\widetilde\phi(t^2a^*\ell_{\tau}\ell_{\tau}^*a)=
\tau_{\Omega}(a^*\ell_{\tau}\ell_{\tau}^*a)(1-t^2)=0.
$$

Finally, to see that $W+W^*\in \cT(\cX)$, notice that $\ell_{\widetilde\xi}\in \cT(\cX)$, since we have identified $\ell_{\xi}\in \cT(\cX)$ with $\ell_{\widetilde\xi}$, and that
\begin{align*}
t\ell_{\tau}+t\ell_{\tau}^*=ts_{\tau}\in \cD\subseteq\cT(\cX). 
\end{align*}

(ii) This follows from (i) and the universal property of $\cT_{\tau}$.

(iii) Since $W+W^*\in \cT(\cX)$, the restriction of the $*$-homomorphism from (ii) to $\cS_{\tau}$ ranges in $\cT(\cX)$.
\end{proof}

\begin{proof}[Proof of Theorem \ref{mainthm}]
Let $(A,\tau)$ be as in the theorem and assume additionally that $A$ is separable.
By the previous lemma, there exists a unital
$*$-homomorphism $\cS_{\tau}\to \cT(\cX)$ fixing $A$, which sends $s_\tau$ to
$W+W^*=ts_\tau+\ell_\xi+\ell_\xi^*$. Combined with $\Psi\colon \cT(\cX)\to A^{\cU}$,
we obtain $j\colon \cS_{\tau}\to A^{\cU}$ fixing $A$ and sending $s_\tau$ to
$\Theta(ts_\tau)+r+r^*$. Preservation of the respective traces is automatic since $\cS_{\tau}$ has a unique trace (by \cite[Theorem~2]{dykema}). It follows that $A$ is selfless by the reduced-free-product description of $\cS_{\tau}$ and \cite[Theorem~2.6(iii)]{selfless}. 

Let us now drop the assumption that $A$ is separable.  The properties of simplicity, infinite dimensionality,  uniqueness of trace, and strict comparison by the trace all
pass to any elementary submodel $(A',\tau|_{A'})\subseteq (A,\tau)$ (see \cite[Proposition 5.10.3, Theorem 8.2.2]{farahetal}). Thus, every separable elementary submodel of $A$ is selfless.
Passing to the direct limit over all its separable elementary submodels, we deduce that $(A,\tau)$ is also selfless \cite[Theorem 4.1]{selfless}. 
\end{proof}	

\begin{proof}[Proof of Corollary \ref{coro:HTO}]
It is well known that the Jiang--Su C*-algebra
is simple, has a unique trace, and has strict comparison
with respect to its trace \cite{jiang-su}.
By Theorem \ref{mainthm}, it is selfless.
By \cite[Theorem~2.6]{selfless}, there is an embedding
of $\mathcal Z*C_r^*(\mathbb F_\infty)$ in $\mathcal Z^{\cU}$.
In particular, $C_r^*(\mathbb F_2)$ embeds in $\mathcal Z^{\cU}$. Since $\mathcal Z$ embeds in $\mathcal Q$ (as the latter is unital and $\mathcal Z$-stable), $C_r^*(\mathbb F_2)$ embeds in $\mathcal Q^{\cU}$.

\end{proof}

\end{document}